\newcommand{\End}{\operatorname{End}}
\newcommand{\Hom}{\operatorname{Hom}}
\newcommand{\add}{\!\operatorname{add}}
\newcommand{\domdim}{\operatorname{domdim}}
\newcommand{\m}{\!\operatorname{-mod}} 
\newcommand{\proj}{\!\operatorname{-proj}}
\newtheorem{numberingthm}{Theorem} 
\theoremstyle{definition}
\newtheorem{Def}[numberingthm]{Definition}
\theoremstyle{plain}
\newtheorem{Prop}[numberingthm]{Proposition}
\newtheorem{Theorem}[numberingthm]{Theorem}
\newtheorem{Cor}[numberingthm]{Corollary}
\newtheorem{Lemma}[numberingthm]{Lemma}
\newtheorem{Remark}[numberingthm]{Remark}
\newenvironment{Example}
{\pushQED{\qed}\example}
{\popQED\endexample}
\providecommand{\keywords}[1]
{\scriptsize
	\textbf{\textit{Keywords:}} #1 \normalsize \hfill
}
\providecommand{\msc}[1]
{\scriptsize
	\textbf{\textit{2010 Mathematics Subject Classification:}} #1 \normalsize \hfill
}
\title{A characterisation of Morita algebras in terms of covers}
\author{Tiago Cruz} 
\date{}
\newcommand{\Address}{{
		\bigskip
		\footnotesize
		
		TIAGO CRUZ,\par \textsc{Institute of Algebra and Number Theory}\par \textsc{University of Stuttgart,}\par \textsc{Pfaffenwaldring 57, 70569 Stuttgart, Germany,}\par\nopagebreak
		\textit{E-mail address}, T.~Cruz: \texttt{tiago.cruz@mathematik.uni-stuttgart.de}
		}}
\begin{document}

\maketitle

\begin{abstract}
A pair $(A, P)$ is called a cover of $\End_A(P)^{op}$ if the Schur functor $\Hom_A(P, -)$ is fully faithful on the full subcategory of projective $A$-modules, for a given projective $A$-module $P$. By definition, Morita algebras are the covers of self-injective algebras and then $P$ is a faithful projective-injective module. Conversely, we show that $A$ is a Morita algebra and $\End_A(P)^{op}$ is self-injective whenever $(A, P)$ is a cover of $\End_A(P)^{op}$ for a faithful projective-injective module $P$.
\end{abstract}
 \keywords{Morita algebras, covers, self-injective algebras. \msc{16G10, 16S50, 16L60.}}

\subsection{Introduction}

Morita algebras were introduced in \citep{zbMATH06264368} to better understand and generalize self-injective algebras. The definition is based on a theorem by Morita (see \citep[section 16]{zbMATH03131967}, \citep[p. 185]{zbMATH06264368}) and it says that a Morita algebra is the endomorphism algebra of a generator over a self-injective algebra. Moreover, Morita showed that this generator can be chosen to be projective-injective of the form $Ae\simeq D(eA)$ when regarded as a left module over the Morita algebra $A$, for some idempotent $e$ of $A$. Modules containing the regular module as a direct summand are examples of generators.

Morita algebras occur in several contexts, including cover theory and the Morita-Tachikawa correspondence.

A cover, in Rouquier's sense \cite{zbMATH05503141}, of an algebra $B$ is a pair $(A, P)$ consisting of the endomorphism algebra $A$ of a generator over $B$ and a certain projective $A$-module $P$. Covers are useful to transfer properties from the cover to $B$ through a Schur functor $\Hom_A(P, -)$. This construction allows us to view the module category of $B$ as a kind of quotient of the module category of its cover $A$. It follows from their definition that Morita algebras are exactly the covers of self-injective algebras.

On the other hand, generators over self-injective algebras are also cogenerators. The endomorphism algebras of generators-cogenerators are described by the Morita-Tachikawa correspondence, which classifies the finite-dimensional algebras with dominant dimension at least two as the endomorphism algebras of a generator-cogenerator.
The famous Nakayama conjecture claims that finite-dimensional algebras with infinite dominant dimension are self-injective.

Many interesting covers arise as endomorphism algebras of generators-cogenerators. In this situation, the following questions arise. Given a faithful projective-injective $A$-module $P$:
\begin{itemize}
	\item When is $(A, P)$ a cover of $\End_A(P)^{op}$?
	\item When is $A$ a Morita algebra?
	\item When is $\End_A(P)^{op}$ a self-injective algebra?
\end{itemize}
Our main result provides answers to these questions and it provides several characterisations of Morita algebras with fewer assumptions than the theorem by Morita that motivated the definition of Morita algebras in \citep[pages 185-186]{zbMATH06264368}:
\begin{Theorem}\label{gendocharaccovers}
	Let $A$ be a finite-dimensional algebra. Assume that $P$ is a faithful projective-injective left $A$-module. Then the following assertions are equivalent:\begin{samepage}
		\begin{enumerate}[(i)]
		\item $(A, P)$ is a cover of $\End_A(P)^{op}$;
		\item $A$ is a Morita algebra;
		\item The endomorphism algebra $\End_A(P)^{op}$ is a self-injective algebra and $\domdim A\geq 2$;
		\item $\domdim A \geq 2$ and $\add_A DA\otimes_A P=\add_A P$.
		\item $\domdim A\geq 2$ and the Nakayama functor restricts to $DA\otimes_A -\colon \add_A P\rightarrow \add_A P$.
	\end{enumerate} \end{samepage}
\end{Theorem}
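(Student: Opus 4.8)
Write $B=\End_A(P)^{op}$, let $F=\Hom_A(P,-)\colon A\m\to B\m$ be the Schur functor and $\nu=DA\otimes_A-$ the Nakayama functor of $A$. \textbf{Reductions.} Since $P$ is faithful there is a monomorphism ${}_AA\hookrightarrow P^n$, so $\domdim A\ge 1$ and the injective envelope of ${}_AA$ lies in $\add P$; comparing the socles of $A$, of the maximal projective--injective $A$-module, and of $P$ then forces $\add P$ to be exactly the additive category of all projective--injective $A$-modules. Hence we may assume $P=Ae$ for an idempotent $e$ with $\add(Ae)=\{\text{projective--injective modules}\}$, $B\cong eAe$ and $F\simeq e(-)$. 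The functor $F$ has a fully faithful left adjoint $G=Ae\otimes_{eAe}-$ and a fully faithful right adjoint $C=\Hom_{eAe}(eA,-)$, it restricts to an equivalence $\add P\xrightarrow{\ \sim\ }eAe\proj$ with quasi-inverse $G$, and evaluating the unit $\mathrm{id}\to CF$ at $A$ shows that \emph{$(A,P)$ is a cover of $B$ if and only if the natural map $A\to\End_{eAe}(eA)$ is an isomorphism}. This already yields \emph{(iv)$\Leftrightarrow$(v)}: (v) says that $\nu$ maps $\add P$ into $\add P$, and since $\nu$ is fully faithful on $\add A\supseteq\add P$ and $\add P$ is Krull--Schmidt with finitely many indecomposables, such a fully faithful endofunctor is automatically essentially surjective, so $\add\nu P=\add P$.

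\textbf{The classical directions.} For \emph{(ii)$\Rightarrow$(i)}, write $A=\End_\Lambda(M)$ with $\Lambda$ self-injective and $M$ a generator. Over a self-injective algebra one has $\add{}_\Lambda\Lambda=\add D\Lambda$, so $M$ is also a cogenerator and $D\Lambda$ is a \emph{generator} of $\Lambda\m$ (the regular module is injective, hence a summand of a power of $D\Lambda$); therefore $\Hom_\Lambda(D\Lambda,-)$ is fully faithful on all of $\Lambda\m$. Since $\add P=\add\Hom_\Lambda(M,D\Lambda)$, $B\cong\End_\Lambda(D\Lambda)^{op}$, and $\Hom_\Lambda(M,-)$ is fully faithful on $\add M$, the Schur functor restricted to $A\proj=\Hom_\Lambda(M,\add M)$ gets identified with $\Hom_\Lambda(D\Lambda,-)|_{\add M}$, which is fully faithful; hence $(A,P)$ is a cover of $B$. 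For \emph{(iii)$\Rightarrow$(ii)}, assume $\domdim A\ge2$ and $eAe$ self-injective. By Müller's characterisation of algebras of dominant dimension $\ge2$ (equivalently the Morita--Tachikawa correspondence), $A\cong\End_{eAe}(Ae)$ with $Ae$ viewed as a right $eAe$-module, and $Ae$ is a generator over $eAe$ because $eAe=e\cdot Ae$ is a direct summand of $Ae$ as a right $eAe$-module. Thus $A$ is the endomorphism algebra of a generator over the self-injective algebra $eAe$, i.e.\ a Morita algebra.

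\textbf{The core: (i)$\Rightarrow$(iii) and (iii)$\Leftrightarrow$(iv).} Assume first $\domdim A\ge2$. Then the equivalence $F\colon\add P\to eAe\proj$ intertwines the two Nakayama functors: one computes $F\nu(Ae)=eD(eA)\cong D(eAe)=\nu_{eAe}(eAe)$ and extends by additivity and naturality, using $\domdim A\ge2$ to identify the relevant Hom--dual pairings over $eAe$ on the remaining projective--injective summands. Consequently $\nu$ maps $\add P$ into itself if and only if $\nu_{eAe}$ maps $eAe\proj$ into itself, i.e.\ if and only if every indecomposable projective $eAe$-module is injective, i.e.\ if and only if $eAe$ is self-injective; this is \emph{(iii)$\Leftrightarrow$(iv)}. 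For \emph{(i)$\Rightarrow$(iii)}: from the isomorphism $A\cong CF(A)$, together with $\domdim A\ge1$ and left-exactness of $C$, one first extracts a length-two projective--injective copresentation of ${}_AA$, so $\domdim A\ge2$; then, granting $\domdim A\ge2$, the cover condition (the isomorphism $A\cong\End_{eAe}(eA)$) says precisely that, beyond Müller's isomorphism, the $(A,eAe)$-bimodule $Ae$ is reflexive-dual to $eA$, and dualising gives $D(eA)\cong Ae$, i.e.\ $\nu(Ae)\in\add P$, which is condition (iv) and hence (iii). Combined with (ii)$\Rightarrow$(i), (iii)$\Rightarrow$(ii) and (iv)$\Leftrightarrow$(v), this proves the equivalence of all five statements.

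\textbf{Main obstacle.} The delicate point is the third paragraph: checking that the Schur functor intertwines the two Nakayama functors on projective--injective modules, and extracting from the cover condition --- a statement only about $A\proj$ --- both $\domdim A\ge2$ and the projectivity of $\nu(Ae)=D(eA)$ (equivalently the self-injectivity of $eAe$). Both steps require careful control of the left/right $eAe$-module structures on $eA$ and $Ae$ and of the precise form of Müller's equivalence, and this is where the hypothesis $\domdim A\ge2$ is genuinely consumed; throughout one must also keep track of the fact that $\add P$ may be a proper subcategory of $A\proj$ --- this is exactly what the socle comparison in the reductions step is designed to control.
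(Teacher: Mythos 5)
Your reductions, the equivalence $(iv)\Leftrightarrow(v)$, and the two ``classical'' implications are fine (your $(ii)\Rightarrow(i)$ via projectivization and the fact that $\Hom_\Lambda(D\Lambda,-)$ is fully faithful for a generator is a pleasant alternative to the paper's route, which instead chains $A\simeq\End_{eAe}(Ae)\simeq\End_{eAe}(D(Ae))^{op}\simeq\End_B(\Hom_A(P,A))^{op}$). The problem is the third paragraph: the two hard implications are asserted, not proved, and the sketches as written do not close. First, you cannot ``first extract $\domdim A\ge2$'' from $A\cong CF(A)$ by left-exactness of $C=\Hom_{eAe}(eA,-)$: to copresent ${}_AA$ inside $\add Ae$ you must apply $C$ to a copresentation of the left $eAe$-module $eA$ and know that $C$ of the copresenting objects lies in $\add Ae$; for injective $eAe$-modules this means $C(D(eAe))\cong D(eA)\in\add Ae$, which is precisely condition $(iv)$ --- circular --- and for projective $eAe$-modules it would require $eA$ to be cogenerated/copresented by $eAe$, which is not available. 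The underlying point is that the cover condition is the double centralizer property on $eA$ (left $eAe$-structure), whereas M\"uller's criterion for $\domdim A\ge2$ is the double centralizer property on $Ae$ (right $eAe$-structure); these are genuinely different conditions (this is exactly Remark \ref{idempotentsdcp} and Example \ref{faithfulprojinjnotgivesacover}), and the paper obtains $\domdim A\ge2$ only at the \emph{end} of the argument, after proving that $DA\otimes_AP$ is projective. Second, ``the cover condition says precisely that $Ae$ is reflexive-dual to $eA$, and dualising gives $D(eA)\cong Ae$'' is the crux of the whole theorem stated without an argument. The paper's actual mechanism is: injectivity of the canonical map $A\to\End_B(FA)^{op}$ makes $\Hom_A(P,A)$ a faithful projective right $A$-module, so $DP$ (right projective and injective) splits off a power of it, hence $P'\in\add DA\otimes_AP'$; then the comparison $\End_A(DA\otimes_AP')^{op}\simeq\End_A(\Hom_A(P',A))\simeq\End_A(P')^{op}$ together with a $k$-dimension count kills the complement and shows $DA\otimes_AP$ is projective, after which M\"uller gives $\domdim A\ge2$ and uniqueness of faithful projective-injectives gives $\add DA\otimes_AP=\add P$. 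Nothing in your sketch substitutes for these steps.

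There is a further gap in your claimed $(iii)\Leftrightarrow(iv)$: the direction ``$eAe$ self-injective $\Rightarrow\nu(\add P)\subseteq\add P$'' does not follow from the intertwining $F\nu_A\cong\nu_{eAe}F$ on $\add P$, because $F=e(-)$ need not reflect membership in $\add P$ on injective modules such as $\nu_A(Ae)=D(eA)$: knowing $eD(eA)\cong D(eAe)$ is $eAe$-projective does not by itself place $D(eA)$ in $\add Ae$. The paper proves this implication (as $(c)\Rightarrow(b)$ of Theorem \ref{relativeMorita}) by a different device: from $A\simeq\End_B(P)$ one gets an $(A,A)$-bimodule isomorphism $DA\simeq P\otimes_BDP$, whence $DA\otimes_AP\simeq P\otimes_BDB$, and $DB$ is a $B$-progenerator since $B$ is self-injective. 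A correct repair along your lines would use M\"uller's isomorphism $A\simeq\End_{eAe}(Ae)$ together with $D(eA)\simeq\Hom_{eAe}(Ae\otimes_{eAe}\,\text{--}\,)$-type adjunction identities and the self-injectivity of $eAe$, but this has to be written out with the correct sides; as it stands, your argument conflates the two double centralizer properties (on $Ae$ versus on $eA$), which is exactly the distinction the theorem is about. Since all of $(i)\Rightarrow(iii)$, $(iii)\Rightarrow(iv)$ pass through these unproved claims, the proposal does not yet establish the theorem.
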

The implications $(ii)\Leftrightarrow (iii)\implies (i)$ are already known by  \citep[section 16]{zbMATH03131967} and Morita-Tachikawa correspondence.  The equivalence $(v)\Leftrightarrow(iv)\Leftrightarrow (ii)$ is related to the study of strongly projective modules in \cite{Onderivedequivalencesandhomologicaldimensions}. Here, we present a shorter proof.
The proof of Theorem \ref{gendocharaccovers} involves the study of double centralizer properties and a reformulation of the definition of Morita algebras using the Nakayama functor. Prominent examples of double centralizer properties are Soergel's double centralizer theorem \citep{zbMATH00005018}, classical Schur--Weyl duality \citep{green} and its many generalizations (see for example \citep{zbMATH07098063}).

As a byproduct of Theorem \ref{gendocharaccovers}, we clarify in Remark \ref{idempotentsdcp} some situations where a double centralizer property on a module $Ae$ is equivalent to a double centralizer property on $eA$, for some idempotent $e$ of a given finite-dimensional algebra $A$. Further, although it does not come as a surprise, we see in Example \ref{example15} that if $P$ is only projective the assertion $(i)$ together with $(A, \Hom_A(P, A))$ being a cover of $\End_A(P)^{op}$ is not sufficient for $A$ to be a Morita algebra.

As application of Theorem \ref{gendocharaccovers}, we give in Corollary \ref{qf1selfinjective} a criterion for a \emph{QF-1} algebra to be a self-injective algebra.

\subsection{Notation}

We will assume throughout this paper that $k$ is a field and $A$ and $B$ are finite-dimensional $k$-algebras.
By $A\m$ we mean the category of finitely generated  left $A$-modules and by $A\proj$ the full subcategory of $A\m$ whose modules are the finitely generated projective $A$-modules. We denote by $\add_A M$ (or just $\add M$ when $A$ is fixed) the full subcategory of $A\m$ whose modules are direct summands of finite direct sums of $M\in A\m$. We write $A\proj$ to denote $\add A$.
For any $M\in A\m$ and $f, g\in \End_A(M)$ the multiplication $fg$ is the composite $f\circ g$ of $g$ and $f$. The opposite algebra of $A$ will be denoted by $A^{op}$. 

Given a finitely generated $(A, B)$-bimodule $M$, there is a \emph{double centralizer property} on $M$ between $A$ and $B$ provided that the multiplication maps on $M$ induce isomorphisms $A\simeq \End_B(M)$ and $B\simeq \End_A(M)^{op}$. 
 By the standard duality $D$ we mean the functor $\Hom_k(-, k)\colon A\m\rightarrow A^{op}\m$.

An algebra $B$ is called \emph{self-injective} if there exists a $B$-isomorphism $DB\simeq B$. If there exists a $(B, B)$-bimodule isomorphism between $DB$ and $B$ then $B$ is called a \emph{symmetric algebra}.
\subsection{Dominant dimension}

Let 
\begin{align}
0\rightarrow {}_A A\rightarrow I_0\rightarrow I_1\rightarrow \cdots \rightarrow I_n \rightarrow \cdots
\end{align} be a minimal injective resolution of the regular module ${}_A A$. We say that the \emph{dominant dimension} of the algebra $A$, denoted by $\domdim A$, is $n\in \mathbb{N}\cup \{\infty \}$ if $I^t$ is projective for $t<n$ and $I_n$ is not. In particular, $\domdim A$ is infinite if all injective modules $I_t$ are projective. Analogously, we can define the dominant dimension using the right regular module $A_A$. This (right) dominant dimension is equal to $\domdim A$.
A detailed account on dominant dimension can be found in \citep{zbMATH03425791, zbMATH03248955}.
 $A$ is called \emph{QF-3 algebra } if $\domdim A\geq 1$. In such a case, $I_0$ is a faithful projective-injective module. Moreover, given another faithful projective-injective module $X\in A\m$, $\add X=\add I_0$ \citep[Lemma 2.3]{zbMATH01623143} and $\domdim A\geq n$ if there exists an exact sequence
\begin{align}
0\rightarrow A\rightarrow X_0\rightarrow X_1\rightarrow \cdots\rightarrow X_{n-1},
\end{align} where $X_i\in \add X$, $i=0, \ldots, n-1$. This last claim follows from \citep[7.7]{zbMATH03425791}. In particular, there exists an idempotent $e$ such that $Ae$ is a projective-injective faithful module which is a direct sum of pairwise non-isomorphic indecomposable modules. Under these conditions, $Ae$ is called a \emph{projective-injective minimal faithful} module.
Furthermore, a minimal faithful projective-injective module $Ae$ (if it exists) has a double centralizer property if and only if $\domdim A\geq 2$ (see for example \citep[Theorem 2]{zbMATH03248955}).
A module $M\in A\m$ is called a \emph{generator} if ${}_A A\in \add_A M$. Analogously, a module \mbox{$M\in A\m$} is called a \emph{cogenerator} if $DA\in \add_A M$. For self-injective algebras the notions of generator and cogenerator coincide.

	\begin{Theorem}[Morita-Tachikawa correspondence] \citep[Theorem 2]{zbMATH03248955}\label{MoritaTachikawa} There is a bijection:
	$$\left. \left\{
	\begin{array}{@{}c@{}}(B, M)\colon \begin{array}{@{}c@{}} B \text{ finite dimensional }\\
	\text{$k$-algebra}
	\\ \text{M a $B$-generator-cogenerator}
	\end{array} \end{array}\right\} \right/ \!\!\sim_1  \longleftrightarrow \left. \left\{ \begin{array}{@{}c@{}}
	A\colon
	\begin{array}{@{}c@{}}
	A  \text{ finite dimensional }\\ \text{$k$-algebra
} \\\domdim A\geq 2 \end{array} \end{array}
	\right\}    \right/  \!\!\sim_2
	$$
Here, $A\sim_2 A'$ if and only if $A$ and $A'$ are isomorphic, whereas, $(B, M)\sim_1 (B', M')$ if and only if there is an
	equivalence of categories $F\colon B\m\rightarrow B'\m$ such that $M'=FM$.
	\begin{flalign*}
\hspace{10em}	(B, M) \ \mapsto \ &  A=\End_B(M)^{op} \\
	(\End_A(N), N) \ \mapsfrom \ & A 
	\end{flalign*}where $N$ is a minimal projective-injective
	faithful right $A$-module.
\end{Theorem}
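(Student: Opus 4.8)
The plan is to verify that the two assignments descend to the stated equivalence relations and are mutually inverse. Throughout I would write $A=\End_B(M)^{op}$ for a left $B$-module $M$, so that $M$ is naturally an $(B,A)$-bimodule and the functor $F:=\Hom_B(M,-)\colon B\m\to A\m$ satisfies $F(M)\cong{}_AA$. A first observation, valid for any $M$, is that $F$ restricts to an equivalence $\add_B M\to A\proj$: it is fully faithful on $\add_B M$ because $\End_A(F(M))\cong\End_B(M)$, and its essential image is $\add_A F(M)=A\proj$. The key external input I would record is Morita's theory of the context attached to $M$: if ${}_BM$ is a generator, then $M$ is finitely generated projective as a right $A$-module and the natural map $B\to\End_A(M)$ is an isomorphism; symmetrically, a finitely generated projective $A$-module for which the double centralizer property holds is a generator over its endomorphism algebra.

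Next I would show that $(B,M)\mapsto A$ is well defined. Let $M$ be a generator-cogenerator. Since $M$ is a cogenerator, $DB\in\add_B M$, so $F(DB)$ is projective; and since every injective $B$-module is a summand of a power of $DB$, the $F$-image of any injective $B$-module lies in $\add_A F(DB)$. Since $M$ is a generator, $M$ is projective as a right $A$-module, and hence $F(DB)\cong DM$ is injective as a left $A$-module; as $A$ acts faithfully on $M$ and therefore on $DM$, the module $F(DB)$ is a faithful projective-injective $A$-module. Applying the left exact functor $F$ to a minimal injective copresentation $0\to M\to I^0\to I^1$ of the $B$-module $M$, whose terms are injective and thus lie in $\add_B M$, yields an exact sequence $0\to A\to F(I^0)\to F(I^1)$ with $F(I^j)\in\add_A F(DB)$. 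By the criterion for lower bounds on the dominant dimension recalled above, this gives $\domdim A\geq 2$. Finally, if $(B,M)\sim_1(B',M')$, then an equivalence carrying $M$ to $M'$ induces $\End_B(M)\cong\End_{B'}(M')$, so the assignment descends to $\sim_1$-classes.

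For the inverse, given $A$ with $\domdim A\geq 2$ I would take $N=eA$ a minimal faithful projective-injective right $A$-module and set $B=\End_A(N)\cong eAe$, so $N$ is an $(B,A)$-bimodule. Using the equivalence between $\domdim A\geq 2$ and the double centralizer property on the minimal faithful module (applied on both sides via the left--right symmetry of the dominant dimension), the multiplication maps give $A\cong\End_B(N)^{op}$ and $B\cong\End_A(N)$; the first isomorphism says exactly that the forward assignment sends $(B,N)$ to $A$, giving surjectivity. It then remains to see that $N$ is a generator-cogenerator over $B$. It is a generator because $B=eAe$ is a direct summand of $N=eA=eAe\oplus eA(1-e)$ as a left $eAe$-module. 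It is a cogenerator because, $N$ being injective as a right $A$-module, its dual $DN$ is finitely generated projective as a left $A$-module and balanced (by dualizing $A\cong\End_B(N)^{op}$); hence by the symmetric half of the Morita input $DN$ is a generator over $\End_A(DN)\cong B^{op}$, which is equivalent to $DB\in\add_B N$.

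The final task is mutual inverseness. One composite is immediate from $A\cong\End_B(N)^{op}$ above. For the other, I would start from $(B,M)$ and form $A=\End_B(M)^{op}$; here $M$ is itself a faithful projective-injective right $A$-module, being projective (generator) and injective (as $DM\cong F(DB)$ is projective over $A$), so $\add_A M=\add_A N$ for the multiplicity-free minimal module $N$. The delicate point, and the step I expect to require the most care, is that $M$ may carry multiplicities that $N$ does not, so that $\End_A(M)\cong B$ and $\End_A(N)=B'$ need only be Morita equivalent rather than isomorphic. I would therefore produce the Morita equivalence $B\m\to B'\m$ attached to the equality $\add_A M=\add_A N$, realised by $\Hom_A(M,N)\otimes_B-$, and check that it carries ${}_BM$ to $\Hom_A(M,N)\otimes_B M\cong N$ by evaluation, using that $M$ is projective over $A$. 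Verifying this compatibility of the bimodule structures, rather than any single homological estimate, is the genuine obstacle, after which $(B,M)\sim_1(B',N)$ follows and the bijection is established.
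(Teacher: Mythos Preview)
The paper does not give its own proof of this theorem: it is quoted, with attribution to Mueller, as background. So there is no proof in the paper to compare against. Your write-up is a correct outline of the standard argument and would serve as a proof; I note only two minor points.

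First, in the cogeneration step you invoke the ``symmetric half of the Morita input'' to conclude that the projective left $A$-module $DN$ is a generator over its endomorphism ring. In fact this does not need the double centralizer hypothesis at all: for any finitely generated projective left $A$-module $P$, writing $P\oplus Q\cong A^n$ and applying $\Hom_A(-,P)$ gives $P^n\cong \End_A(P)\oplus \Hom_A(Q,P)$ as right $\End_A(P)^{op}$-modules, so $P$ is automatically a generator there. Your conclusion is correct, just overstated in its hypotheses.

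Second, you appeal to the equivalence ``$\domdim A\geq 2$ $\Leftrightarrow$ double centralizer on the minimal faithful projective-injective'' in the inverse direction. This is exactly the companion statement the paper also quotes (again from Mueller), so it is a legitimate external input; but be aware that in Mueller's paper this fact and the correspondence are packaged together, so depending on the intended level of self-containment you may want to sketch it directly from the exact sequence $0\to A\to I_0\to I_1$ rather than cite it.

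The final step, producing the Morita equivalence $\Hom_A(M,N)\otimes_B-$ and checking that evaluation gives $\Hom_A(M,N)\otimes_B M\cong N$ as $(B',A)$-bimodules, is correctly identified as the point requiring care and your description of it is accurate.
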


Usually, the Morita-Tachikawa correspondence is formulated for basic algebras. However, the above formulation is also equivalent due to a double centralizer property being a Morita invariant property.

\begin{Theorem}\citep[10.1]{zbMATH03425791}
	Let $A$ and $B$ be finite-dimensional $k$-algebras. Suppose that there is an equivalence $H\colon A\m\rightarrow B\m$. If there is a double centralizer property on $M\in A\m$ then there is a double centralizer property on $HM\in B\m$.
\end{Theorem}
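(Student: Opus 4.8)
The plan is to reformulate ``$N$ has a double centralizer property'' — meaning: between $A$ and $\End_A(N)^{op}$, with $N$ regarded as the canonical $(A,\End_A(N)^{op})$-bimodule — as the bijectivity of a single natural map living entirely inside a module category, and then to observe that an additive equivalence automatically preserves such bijectivity. For a left $A$-module $N$, let $c_N$ be the natural $k$-linear map $\End_A({}_AA)\to\End_{\End_A(N)}\bigl(\Hom_A({}_AA,N)\bigr)$, $\rho\mapsto(\phi\mapsto\phi\circ\rho)$, where $\Hom_A({}_AA,N)$ carries the left $\End_A(N)$-module structure by post-composition. Using the identifications $\End_A({}_AA)\simeq A^{op}$ (via $\rho\mapsto\rho(1)$) and $\Hom_A({}_AA,N)\simeq N$ (via $\phi\mapsto\phi(1)$, which intertwines post-composition with the evaluation action $f\cdot n=f(n)$), the map $c_N$ is identified, up to these bijections, with the map $A\to\End_{\End_A(N)^{op}}(N)$ (equivalently $\End_{\End_A(N)}(N)$, since a right $\End_A(N)^{op}$-module is the same as a left $\End_A(N)$-module) given by left multiplication. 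Hence $c_N$ is bijective if and only if $N$ has a double centralizer property. The same description applies over $B$ for any left $B$-module.

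First I would apply the equivalence $H$ to the pair $({}_AA,M)$: being an equivalence, $H$ induces an algebra isomorphism $\End_A(M)\simeq\End_B(HM)$ together with isomorphisms $\End_A({}_AA)\simeq\End_B(H({}_AA))$ and $\Hom_A({}_AA,M)\simeq\Hom_B(H({}_AA),HM)$ compatible with all module structures, and by functoriality (preservation of composition) it carries $c_M$ to the analogous map built over $B$ from the objects $H({}_AA)$ and $HM$; hence $c_M$ is bijective if and only if that map is. The remaining difficulty is that an equivalence need not send ${}_AA$ to ${}_BB$ but only to some projective generator, so that $\add_B H({}_AA)=B\proj$, which merely contains ${}_BB$. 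I would bridge this by showing that, for a fixed module $HM$, bijectivity of the map built over $B$ from a pair $(X,HM)$ is unchanged when $X$ is replaced by any object of $\add_B X$: replacing $X$ by $X^{\oplus n}$ turns the map into its $n\times n$ matrix (using $\Hom_B(X^{\oplus n},HM)\simeq\Hom_B(X,HM)^{\oplus n}$ and $\End_B(X^{\oplus n})\simeq\operatorname{Mat}_n(\End_B(X))$), and for a direct summand $X=Y\oplus Z$ the map for $X$ decomposes, compatibly with these block structures, into a $2\times 2$ array whose $(Y,Y)$-entry is precisely the map for $Y$; bijectivity therefore propagates from $X$ to $X^{\oplus n}$ and back down to any summand. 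Taking $X=H({}_AA)$, whose additive closure contains ${}_BB$, we conclude that the map built from $(H({}_AA),HM)$ is bijective if and only if the map built from $({}_BB,HM)$ is — that is, if and only if $HM$ has a double centralizer property over $B$, which is the assertion.

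The step I expect to be the real obstacle is exactly this last one, namely compensating for the fact that $H({}_AA)$ is not the regular module; the rest is formal, the main cost being careful bookkeeping of opposite algebras and of left- versus right-module structures so that the various natural maps genuinely match up. A more computational alternative avoids the additive-closure argument: write $H\simeq Q\otimes_A-$ for a $(B,A)$-bimodule $Q$ with $Q_A$ a progenerator and choose an idempotent $\eta\in\End(A_A^{n})$ with $\eta(A_A^{n})\simeq Q$, so that $B\simeq\End(Q_A)\simeq\eta\End(A_A^{n})\eta$; then $HM\simeq Q\otimes_A M$, with $\End_A(M)^{op}$ acting through the second tensor factor, is the right $\End_A(M)^{op}$-module direct summand of $M^{\oplus n}$ cut out by the idempotent $\eta\otimes\mathrm{id}_M$, the double centralizer property for $M$ identifies $\End_{\End_A(M)^{op}}(M^{\oplus n})$ with $\End(A_A^{n})$ compatibly with this idempotent, and passing to the corner at $\eta$ yields $\End_{\End_A(M)^{op}}(HM)\simeq\eta\End(A_A^{n})\eta\simeq B$, which — using $\End_A(M)^{op}\simeq\End_B(HM)^{op}$ — is the double centralizer property for $HM$.
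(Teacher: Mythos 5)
Your argument is correct. Note that the paper does not actually prove this statement --- it is quoted from Tachikawa's notes \citep[10.1]{zbMATH03425791} --- so there is no in-paper proof to compare against; your write-up is essentially the standard Morita-invariance argument. The two genuine points are both handled properly: (1) recasting the double centralizer property as bijectivity of the natural map $c_{X,N}\colon\End(X)\to\End_{\End(N)}(\Hom(X,N))$ for $X={}_AA$, which is built purely from objects and compositions inside the module category and is therefore transported by any equivalence; and (2) compensating for the fact that $H({}_AA)$ is only a progenerator, via the observation that bijectivity of $c_{X,N}$ depends only on $\add X$ (the matrix-algebra/corner decomposition, where the map splits $k$-linearly into blocks, so bijectivity passes to $X^{\oplus n}$ and to direct summands). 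Your alternative route via $H\simeq Q\otimes_A-$ and the idempotent $\eta$ cutting $Q$ out of $A_A^{\oplus n}$ is equally valid and is closer to how such statements are usually proved in the literature. The only caveat is bookkeeping of the various $(-)^{op}$'s and left/right conventions, which you flag explicitly and which does not affect bijectivity.
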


 \subsection{Covers} The theory of covers was introduced by Rouquier \citep{zbMATH05503141}.
 
 \begin{Lemma}\citep[Proposition 4.33]{zbMATH05503141} Let $A$ and $B$ be finite-dimensional $k$-algebras such that $B=\End_A(P)^{op}$, for some $P\in A\proj$. Denote by $F$ the Schur functor $\Hom_A(P, -)\colon A\m\rightarrow B\m$ and denote by $G$ its right adjoint $\Hom_B(FA, -)$. The following assertions are equivalent. \label{dcpropertycover}
 	\begin{enumerate}[(i)]
 		\item The canonical map of algebras $A\rightarrow \End_B(FA)^{op}$, given by $a\mapsto (f\mapsto f(-)a)$, $a\in A, f\in FA$, is an isomorphism of $k$-algebras.
 		\item For all $M\in A\proj$, the unit $\eta_M\colon M\rightarrow GFM$ is an isomorphism of $A$-modules.
 		\item The restriction of $F$ to $A\proj$ is full and faithful.
 	\end{enumerate}
 \end{Lemma}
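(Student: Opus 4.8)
The plan is to prove the cyclic chain of implications $(ii)\Rightarrow(iii)\Rightarrow(i)\Rightarrow(ii)$, using the unit $\eta$ of the adjunction $(F,G)$ as the common thread. As preparation I would record a few identifications: $F$ and $G$ are additive, $GF(A)=\Hom_B(FA,FA)=\End_B(FA)$, and a direct computation shows that the unit $\eta_A\colon A\to GF(A)$ is exactly the canonical map $a\mapsto(f\mapsto f(-)a)$ of $(i)$; in particular $\eta_A$ is at once a homomorphism of left $A$-modules and a homomorphism of $k$-algebras $A\to\End_B(FA)^{op}$. I would also invoke the standard identity relating an adjoint functor on morphisms to the unit: for all $M,N\in A\m$, the map $F\colon\Hom_A(M,N)\to\Hom_B(FM,FN)$ becomes $g\mapsto\eta_N\circ g$ after composing with the adjunction isomorphism $\Hom_B(FM,FN)\xrightarrow{\sim}\Hom_A(M,GFN)$.

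For $(ii)\Rightarrow(iii)$: given $M,N\in A\proj$, assumption $(ii)$ says $\eta_N$ is an isomorphism, so $g\mapsto\eta_N\circ g$ is a bijection $\Hom_A(M,N)\to\Hom_A(M,GFN)$; composing with the inverse of the adjunction isomorphism shows that $F\colon\Hom_A(M,N)\to\Hom_B(FM,FN)$ is a bijection. Hence the restriction of $F$ to $A\proj$ is full and faithful.

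For $(iii)\Rightarrow(i)$ and $(i)\Rightarrow(ii)$: applying $(iii)$ with $M=N=A$ makes the ring homomorphism $F\colon\End_A({}_AA)\to\End_B(FA)$ bijective, hence an isomorphism; precomposing with the canonical algebra isomorphism $A^{op}\xrightarrow{\sim}\End_A({}_AA)$, $a\mapsto(x\mapsto xa)$, yields an algebra isomorphism $A^{op}\to\End_B(FA)$ which, by the preparation, is precisely $\eta_A$. Thus $\eta_A\colon A\to\End_B(FA)^{op}$ is an algebra isomorphism, i.e.\ $(i)$ holds. Conversely, assuming $(i)$, $\eta_A$ is an isomorphism of left $A$-modules; for arbitrary $M\in A\proj=\add A$ pick $M'$ with $M\oplus M'\cong A^{n}$, so that naturality of $\eta$ together with additivity of $F$ and $G$ identify $\eta_{A^{n}}$ with $\eta_A^{\oplus n}$ and $\eta_{M\oplus M'}$ with $\eta_M\oplus\eta_{M'}$; since $\eta_A^{\oplus n}$ is an isomorphism and a direct sum of morphisms is an isomorphism if and only if each summand is, $\eta_M$ is an isomorphism, which proves $(ii)$.

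The only genuinely delicate step is the preparatory bookkeeping: tracking the left/right module structures on $\Hom_A(P,A)$ and the opposite-algebra conventions carefully enough that the canonical map of $(i)$, the unit $\eta_A$, and the isomorphism extracted from $(iii)$ coincide on the nose rather than merely up to abstract isomorphism. Everything else is a formal consequence of the adjunction $(F,G)$ and additivity of the functors involved.
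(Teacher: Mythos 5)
Your proof is correct. The paper gives no proof of this lemma --- it is quoted from Rouquier's Proposition 4.33 --- so there is nothing to compare against, but your argument is the standard one: you correctly identify the canonical map of $(i)$ with the unit $\eta_A$ (and with $F$ applied to right multiplications), derive $(ii)\Rightarrow(iii)$ from the factorisation of $F$ on morphisms through $\eta_N$, and propagate the isomorphism $\eta_A$ to all of $\add A$ by additivity and naturality; the bookkeeping of the opposite-algebra conventions, which you rightly flag as the only delicate point, checks out.
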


\begin{Def}\label{coverdef}
	We say that $(A, P)$ is a \textbf{cover} of $B$ if the restriction of \mbox{$F=\Hom_A(P, -)\colon A\m\rightarrow B\m$} to $A\proj$ is full and faithful. 
\end{Def}
 
 \begin{Remark}
 	In the situation of Definition \ref{coverdef}, a double centralizer property holds on $FA$, but not necessarily on $P$.
 \end{Remark}

Before we proceed with some basic results about covers, recall the following result.

\begin{Prop}\label{additiveclosureonendomorphismalg}
	Let $M, N\in A\m$ with $\add_A M=\add_A N$. Then the algebras $B:=\End_A(M)^{op}$ and $C:=\End_A(N)^{op}$ are Morita equivalent and the algebras $\End_B(M)$ and $\End_C(N)$ are isomorphic.
\end{Prop}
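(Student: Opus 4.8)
Set $\Lambda := \End_A(M)$ and $\Gamma := \End_A(N)$, so $B = \Lambda^{op}$ and $C = \Gamma^{op}$; under these identifications $M$ is naturally a left $\Lambda$-module, $N$ a left $\Gamma$-module, and $\End_B(M) = \End_\Lambda(M)$, $\End_C(N) = \End_\Gamma(N)$. Moreover $B$ and $C$ are Morita equivalent exactly when $\Lambda$ and $\Gamma$ are. So the plan is: (1) produce an \emph{explicit} Morita equivalence $\mathsf{G}\colon \Lambda\m \to \Gamma\m$; (2) show that $\mathsf{G}$ sends $M$ to $N$; (3) conclude $\End_\Lambda(M) \cong \End_\Gamma(\mathsf{G}M) \cong \End_\Gamma(N)$, since an equivalence of categories induces isomorphisms of endomorphism rings. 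The hypothesis $\add_A M = \add_A N$ is used only through the two containments $M \in \add_A N$ and $N \in \add_A M$.

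For (1) I would take the $(\Gamma, \Lambda)$-bimodule $V := \Hom_A(M, N)$ (with $\Gamma = \End_A(N)$ acting by postcomposition and $\Lambda = \End_A(M)$ by precomposition) and the $(\Lambda, \Gamma)$-bimodule $W := \Hom_A(N, M)$, and set $\mathsf{G} := V \otimes_\Lambda -$. That this is a Morita equivalence with quasi-inverse $W \otimes_\Gamma -$ is precisely the classical statement that $\add_A M = \add_A N$ makes $\End_A(M)$ and $\End_A(N)$ Morita equivalent; concretely, the composition maps $V \otimes_\Lambda W \to \Gamma$ and $W \otimes_\Gamma V \to \Lambda$ are surjective, because factorisations $\mathrm{id}_N = \sum_i f_i g_i$ (with $g_i\colon N\to M$, $f_i\colon M\to N$) and similarly for $\mathrm{id}_M$ — available since $N\in\add_A M$ and $M\in\add_A N$ — exhibit $\mathrm{id}_N$ and $\mathrm{id}_M$ in the respective images, and a standard Morita-context argument upgrades this to bijectivity. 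Equivalently one may just cite this, or invoke the projectivization equivalences $\Hom_A(M,-)\colon\add_A M\xrightarrow{\sim}B\proj$ and $\Hom_A(N,-)\colon\add_A N\xrightarrow{\sim}C\proj$ together with $\add_A M=\add_A N$. I would dispatch this step quickly.

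Step (2) is the substance. For any $A$-module $N'$ there is an evaluation morphism $\epsilon_{N'}\colon \Hom_A(M, N') \otimes_\Lambda M \to N'$, $g \otimes m \mapsto g(m)$; it is a morphism of left $\End_A(N')$-modules (with $\End_A(N')$ acting by postcomposition on $\Hom_A(M,N')$ and by evaluation on $N'$), it is natural in $N'$, and it is an isomorphism for $N' = M$, with inverse $m \mapsto \mathrm{id}_M \otimes m$. Since $\Hom_A(M,-) \otimes_\Lambda M$ and the identity functor are additive and $\epsilon$ is a natural transformation invertible at $M$, it is invertible at every object of $\add_A M$; as $N \in \add_A M$, the map $\epsilon_N\colon V \otimes_\Lambda M \to N$ is an isomorphism of left $\Gamma$-modules. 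Hence $\mathsf{G}M = V \otimes_\Lambda M \cong N$ in $\Gamma\m$.

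For (3), $\mathsf{G}$ is fully faithful, so $\End_B(M) = \End_\Lambda(M) \cong \End_\Gamma(\mathsf{G}M) \cong \End_\Gamma(N) = \End_C(N)$ as $k$-algebras; together with the Morita equivalence from step (1) this is exactly the claimed statement. I expect the only real friction to be notational bookkeeping: carefully fixing the left/right and opposite-algebra conventions for the actions of $\End_A(M)$ and $\End_A(N)$ on $M$, $N$, $V$ and $W$, and checking compatibility of $\epsilon_{N'}$ with the $\End_A(N')$-action. Once these are pinned down, every verification in (1)–(3) is a one-line computation, and the appeal in step (1) to the Morita equivalence induced by $\add_A M = \add_A N$ is entirely standard.
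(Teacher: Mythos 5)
Your argument is correct. Note that the paper does not actually prove this proposition; it simply cites \citep[Proposition 1.3]{zbMATH06264368}, so there is no internal proof to compare against. What you have written is essentially the standard proof of that cited fact: the Morita context $\bigl(\Lambda,\Gamma,\Hom_A(M,N),\Hom_A(N,M)\bigr)$ with both trace maps surjective (surjectivity coming exactly from $N\in\add_A M$ and $M\in\add_A N$), followed by the observation that the evaluation map $\Hom_A(M,-)\otimes_\Lambda M\rightarrow \operatorname{id}$ is invertible on $\add_A M$, so the induced equivalence carries $M$ to $N$ and hence identifies $\End_\Lambda(M)$ with $\End_\Gamma(N)$. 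The one place to be slightly careful is the bookkeeping you already flag: with the paper's convention $fg=f\circ g$, the module $M$ is a left $\End_A(M)$-module, equivalently a right $B$-module for $B=\End_A(M)^{op}$, and $\End_B(M)=\End_\Lambda(M)$; once that is fixed, each verification (well-definedness of $\epsilon$ over $\otimes_\Lambda$, $\Gamma$-equivariance of $\epsilon_N$, passage from $\Lambda\sim\Gamma$ to $B\sim C$ by taking opposites) is routine, and your step (1) could equally well be replaced by the projectivization equivalences you mention. So your proposal supplies a complete, self-contained justification of a statement the paper handles by reference.
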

\begin{proof}
	See for example \citep[Proposition 1.3]{zbMATH06264368}.
\end{proof}

\begin{Prop}
	Let $A$ be a QF-3 algebra with a projective-injective faithful right $A$-module $V$. If $\domdim A\geq 2$ then $(A, \Hom_A(V, A))$ is a cover of $B:=\End_A(V)$.\label{dominantgeqtwodcp}
\end{Prop}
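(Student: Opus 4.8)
The plan is to exploit the characterisation of covers given in Lemma \ref{dcpropertycover}, specifically the equivalence $(i)\Leftrightarrow(iii)$: to show $(A,\Hom_A(V,A))$ is a cover of $B=\End_A(V)$ it suffices to verify the double centralizer property on $FA=\Hom_A(V,A)$, i.e.\ that the canonical map $A\rightarrow\End_B(\Hom_A(V,A))^{op}$ is an isomorphism. Here I must be careful with sides: $V$ is a right $A$-module, so $\Hom_A(V,A)$ is naturally a left $A$-module (via the left action on the target copy of $A$) and a left $B$-module, and the roles of left/right modules in Lemma \ref{dcpropertycover} have to be transported to the opposite algebra $A^{op}$. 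So the first step is to set up the dictionary: $V$ being a projective-injective faithful right $A$-module means $V$ is a projective-injective faithful left $A^{op}$-module, and $\domdim A=\domdim A^{op}\geq 2$.

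The second step is to identify $\Hom_A(V,A)$ with a concrete projective-injective module. Since $V$ is a faithful projective-injective right $A$-module, by the discussion in the Dominant dimension subsection $\add V_A=\add (eA)$ where $Ae$ is a minimal faithful projective-injective module; equivalently $\add_A \Hom_A(V,A)=\add {}_AAe=\add(Ae)$ as left $A$-modules, because $\Hom_A(eA,A)\simeq Ae$ and $\Hom_A(-,A)$ sends $\add(eA)$ to $\add(Ae)$. Thus it is enough, using Proposition \ref{additiveclosureonendomorphismalg}, to prove the double centralizer property for the minimal faithful projective-injective left module $Ae$ in place of $\Hom_A(V,A)$: the endomorphism algebras are matched up and Morita equivalence preserves the double centralizer property (by the cited result \citep[10.1]{zbMATH03425791}). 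The key input here is the fact, already recalled in the excerpt, that a minimal faithful projective-injective module $Ae$ has a double centralizer property between $A$ and $eAe$ if and only if $\domdim A\geq 2$ (see \citep[Theorem 2]{zbMATH03248955}); and $\End_A(Ae)^{op}\simeq eAe$, which is Morita equivalent to $B=\End_A(V)$ since $\add V_A=\add(eA)$ gives $B\simeq \End_{A^{op}}(eA)^{op}$, Morita equivalent to $\End_{A^{op}}(Ae\text{-side})$\,---\,again by Proposition \ref{additiveclosureonendomorphismalg}.

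Putting this together: from $\domdim A\geq 2$ we get the double centralizer property on $Ae$, hence by transport along the Morita equivalence $\add(Ae)=\add\Hom_A(V,A)$ the double centralizer property on $\Hom_A(V,A)$, i.e.\ condition $(i)$ of Lemma \ref{dcpropertycover} for the pair $(A,P)$ with $P=\Hom_A(V,A)$; by that lemma condition $(iii)$ holds, which is exactly the statement that $(A,\Hom_A(V,A))$ is a cover of $B=\End_A(V)=\End_A(\Hom_A(V,A))^{op}$ (the last identification being the standard $\End_A(\Hom_A(V,A))^{op}\simeq\End_A(V)$ coming from $\add V_A=\add(eA)$ and $\Hom_A(eA,A)\simeq Ae$, reflexivity of projectives).

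The main obstacle I expect is purely bookkeeping: keeping straight which $\Hom$-space carries a left versus a right module structure over which of $A$, $A^{op}$, $B$, $B^{op}$, $eAe$, and checking that the various canonical maps (the double centralizer map for $Ae$, the Morita equivalence transporting it, and the adjunction unit in Lemma \ref{dcpropertycover}) genuinely compose to the canonical map $A\to\End_B(\Hom_A(V,A))^{op}$ rather than merely to \emph{an} isomorphism. Once the identifications $\Hom_A(eA,A)\simeq Ae$ and $\add\Hom_A(V,A)=\add(Ae)$ are nailed down and one invokes Proposition \ref{additiveclosureonendomorphismalg} together with the Morita invariance of double centralizer properties, the rest is immediate from the already-quoted equivalence ``$\domdim A\geq 2\iff$ double centralizer property on the minimal faithful projective-injective module'' and Lemma \ref{dcpropertycover}.
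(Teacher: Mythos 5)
There is a genuine gap, and it occurs at the very first step. With $P=\Hom_A(V,A)$ playing the role of the projective module in Lemma \ref{dcpropertycover}, the module $FA$ is $\Hom_A(P,A)=\Hom_A(\Hom_A(V,A),A)\simeq V$ by reflexivity of the projective module $V$ --- it is \emph{not} $\Hom_A(V,A)$. So condition $(i)$ of that lemma asks for the double centralizer property on the $(B,A)$-bimodule $V$, namely $A\simeq \End_B(V)^{op}$, and not for a double centralizer property on the $(A,B)$-bimodule $\Hom_A(V,A)$ itself. These are not interchangeable: the Remark after Definition \ref{coverdef} (and Remark \ref{idempotentsdcp}) records precisely that a cover yields a double centralizer property on $FA$ but not necessarily on $P$. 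In fact the statement you set out to verify is false under the given hypotheses: in Example \ref{faithfulprojinjnotgivesacover}, take $V=DP=e_2A\oplus e_3A$; then $\Hom_A(V,A)\simeq P(2)\oplus P(3)$ is annihilated by $\alpha_1$, hence is not even a faithful left $A$-module, so the canonical map $A\to\End_B(\Hom_A(V,A))$ is not injective --- and yet $(A,P(2)\oplus P(3))$ \emph{is} a cover of $B$.

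A second error compounds the first: you assume that the idempotent $e$ for which $Ae$ is the minimal faithful projective-injective left module also satisfies $\add V_A=\add (eA)_A$, equivalently $\add_A\Hom_A(V,A)=\add_A (Ae)$. In general the left and right minimal faithful projective-injective modules are attached to \emph{different} idempotents (in Example \ref{faithfulprojinjnotgivesacover} they are $A(e_1+e_2)$ and $(e_2+e_3)A$), so $\Hom_A(V,A)$ lies in $\add Af$ for the ``right-hand'' idempotent $f$, the module $Af$ need not be projective-injective, and Mueller's theorem gives no double centralizer property on it. The repair is to stay on the right-module side throughout, which is what the paper does: let $eA$ be the minimal faithful projective-injective \emph{right} module, obtain $A\simeq\End_{eAe}(eA)^{op}$ from $\domdim A\geq 2$, transport this along $\add V_A=\add (eA)_A$ via Proposition \ref{additiveclosureonendomorphismalg} to get $A\simeq\End_B(V)^{op}$, and only then use reflexivity to rewrite $V$ as $\Hom_A(\Hom_A(V,A),A)=FA$, which is exactly condition $(i)$ of Lemma \ref{dcpropertycover}.
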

\begin{proof}
	Let $eA$ be the minimal right projective-injective faithful $A$-module. Since $\domdim A\geq 2$ there is a double centralizer property $\End_{eAe}(eA)^{op}\simeq A$. Because of $V$ being faithful projective-injective, $\add_A V=\add_A eA$. By Proposition \ref{additiveclosureonendomorphismalg}, $\End_{eAe}(eA)^{op}\simeq \End_B(V)^{op}$. Thus, 
	\begin{align}
	A\simeq \End_{eAe}(eA)^{op}\simeq \End_B(V)^{op}\simeq \End_B(\Hom_A(\Hom_A(V, A), A))^{op}.
	\end{align}The last isomorphism follows from $V$ being right $A$-projective and therefore $V$ being reflexive, that is, $V\simeq \Hom_A(\Hom_A(V, A), A)$. Further, this isomorphism is also an isomorphism of $B$-modules. So, the claim follows.
\end{proof}

The definition of cover can be formulated in general for finitely generated projective algebras over Noetherian rings. Unlike the general case, covers of finite-dimensional algebras can always be reduced to covers arising from idempotents.
 
 	\begin{Prop}\label{idempotentcovers}
 	If $(A, P)$ is a cover of $B$ then there exists an idempotent $e\in A$ such that $(A, Ae)$ is a cover of $eAe$.
 \end{Prop}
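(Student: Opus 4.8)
The plan is to reduce the cover $(A,P)$ to one coming from an idempotent by replacing $P$ with a basic version of itself. Concretely, write $P\cong P_1^{a_1}\oplus\cdots\oplus P_r^{a_r}$ with the $P_i$ pairwise non-isomorphic indecomposable projectives and $a_i\geq 1$, and set $Q=P_1\oplus\cdots\oplus P_r$, so that $\add_A P=\add_A Q$. Since $Q$ is projective and basic, there is an idempotent $e\in A$ with $Q\cong Ae$ as left $A$-modules; this is exactly how one produces idempotents from direct summands of the regular module. The first step is therefore to fix this $e$ and record the isomorphism $Ae\cong Q$.

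Next I would use Proposition \ref{additiveclosureonendomorphismalg} with $M=P$ and $N=Q=Ae$. It gives that $B=\End_A(P)^{op}$ and $C:=\End_A(Ae)^{op}\cong (eAe)^{op}$, or after the standard identification simply $eAe$, are Morita equivalent, and moreover $\End_B(P)\cong\End_C(Ae)$. The point of invoking this proposition is that "being a cover" only depends, up to the relevant equivalences, on the additive closure of the second component: by Lemma \ref{dcpropertycover}(i)$\Leftrightarrow$(iii), $(A,P)$ is a cover of $B$ precisely when the canonical map $A\to\End_B(FA)^{op}$ is an isomorphism, where $FA=\Hom_A(P,A)$, and this is equivalent to the corresponding statement with $P$ replaced by $Ae$ because the two endomorphism-of-$\Hom(-,A)$ algebras are identified under the Morita equivalence $B\sim C$. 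Thus the third step is to transport the double centralizer property on $FA=\Hom_A(P,A)$ along the Morita equivalence $B\m\to C\m$ (using the transfer result \citep[10.1]{zbMATH03425791} quoted above, or a direct bimodule computation) to the statement that $A\to\End_{C}(\Hom_A(Ae,A))^{op}$ is an isomorphism. Since $\Hom_A(Ae,A)\cong eA$ as a right module and $C\cong eAe$, this says exactly that the multiplication map $A\to\End_{eAe}(eA)^{op}$ is an isomorphism, which by Lemma \ref{dcpropertycover} again is equivalent to $(A,Ae)$ being a cover of $eAe$.

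The main obstacle is bookkeeping rather than conceptual: one must check that the Morita equivalence between $B$ and $C$ supplied by Proposition \ref{additiveclosureonendomorphismalg} is compatible with the functors $\Hom_A(P,-)$ and $\Hom_A(Ae,-)$ in the precise sense needed, i.e. that it identifies the $B$-module $FA$ with the $C$-module $\Hom_A(Ae,A)$ and intertwines the two canonical algebra maps from $A$. This compatibility is essentially built into the proof of Proposition \ref{additiveclosureonendomorphismalg} (the equivalence is given by a $\Hom$ over $A$ against a progenerator in $\add_A P=\add_A Ae$), so the cleanest route is to spell out the progenerator bimodule implementing $B\sim C$ and observe that applying it to $FA$ yields $\Hom_A(Ae,A)$ together with the matching $A$-action; then invoking Lemma \ref{dcpropertycover} on both sides closes the argument. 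Alternatively, one can bypass the transfer lemma entirely and argue directly with condition (iii) of Lemma \ref{dcpropertycover}: fullness and faithfulness of $\Hom_A(P,-)$ on $A\proj$ is manifestly unchanged when $P$ is replaced by any $P'$ with $\add_A P'=\add_A P$, since both functors have the same kernel and image behaviour on $A\proj$, and $Ae$ is such a $P'$; this gives the result with almost no computation.
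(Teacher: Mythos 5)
Your proposal is correct and follows essentially the same route as the paper: decompose $P$ into indecomposables, pass to a basic summand $Q=Ae$ with $\add_A Ae=\add_A P$, and transport the cover condition across the Morita equivalence between $B=\End_A(P)^{op}$ and $eAe$ implemented by $\Hom_B(\Hom_A(P,Ae),-)$, using that the canonical map $\Hom_A(Ae,A)\to\Hom_B(F(Ae),FA)$ is bijective because $Ae\in\add_A P$. Your closing remark that one can equivalently argue via condition (iii) of Lemma \ref{dcpropertycover} is a valid reformulation of the same identification, not a genuinely different argument.
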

 \begin{proof}
 	We can decompose $P$ into a direct sum of projective indecomposables $P_1\oplus \cdots \oplus P_n$. By the  Krull-Remak-Schmidt Theorem, there is a subset $I$ of $\{1, \ldots, n\}$ so that $Q:=\bigoplus_{i\in I}P_i$ is an $A$-summand of $A$, where the modules $P_i$, $i\in I$, are pairwise non-isomorphic and $\add Q=\add P$.
 	 Moreover, there exists an idempotent $e\in A$ such that $Ae\simeq Q$. Hence, the algebras $B$ and $eAe$ are Morita equivalent. The functor $\Hom_B(\Hom_A(P, Ae), -)\colon B\m\rightarrow eAe\m$ is an equivalence of categories. On the other hand, the canonical map $\Hom_A(Ae, A)\rightarrow \Hom_B(F(Ae), FA)$ is bijective. Moreover, it is an $eAe$-isomorphism. Therefore, \begin{align}
 	A&\simeq \End_B(\Hom_A(P, A))^{op}\simeq \End_{eAe}(\Hom_B(\Hom_A(P, Ae), \Hom_A(P, A)) )^{op} \label{eqfc01} \\&= \End_{eAe}(\Hom_B(F(Ae), FA))^{op}\simeq \End_{eAe}(\Hom_A(Ae, A))^{op}. \tag*{\qedhere}
 	\end{align}
 \end{proof}

As mentioned, covers can be used to obtain properties of the module category of an algebra using one of its covers, for example, the number of blocks, or classification of simple modules, among many others. Although we do not pursue this direction here, cover theory really shines when the cover has finite global dimension and the algebra $B$ has not. For self-injective algebras $B$, covers of $B$ with finite global dimension are the non-commutative resolutions of \citep{zbMATH06439494}. As in their particular case, covers are non-commutative unless the cover of $B$ is isomorphic to $B$ itself.

\begin{Prop}
	Suppose that $A$ is a finite-dimensional commutative $k$-algebra. If $(A, Ae)$ is a cover of $eAe$, for some idempotent $e$ in $A$, then $A$ is isomorphic to $eAe$.
\end{Prop}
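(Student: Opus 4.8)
The plan is to exploit commutativity to force $Ae$ to be a projective generator, and then conclude via a double centralizer property. First I would observe that for a commutative algebra $A$ and an idempotent $e$, we have $eAe = Ae$ as a $k$-algebra, and the idempotent $e$ is central if $A$ is commutative. Indeed, for any $a \in A$, commutativity gives $ea = ae$, so $e$ lies in the centre of $A$; consequently $A \simeq eA \times (1-e)A$ as algebras and $Ae = eAe = eA$ is a direct factor of $A$. In particular $Ae$ is a projective $A$-module whose annihilator is $(1-e)A$.

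Next I would use the hypothesis that $(A, Ae)$ is a cover of $eAe$. By Lemma~\ref{dcpropertycover} (the equivalence $(iii) \Leftrightarrow (i)$), the canonical algebra map $A \rightarrow \End_B(FA)^{op}$ is an isomorphism, where $B = \End_A(Ae)^{op} \simeq eAe = Ae$ and $F = \Hom_A(Ae, -)$. The key point is that this forces $A$ to act faithfully on $FA = \Hom_A(Ae, A) = eA = Ae$: the kernel of the map $A \to \End_B(FA)^{op}$ contains the annihilator of the $A$-module $FA$, which (since $FA \cong Ae$ as an $A$-module via the central-idempotent identification) is exactly $(1-e)A$. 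For the canonical map to be injective we therefore need $(1-e)A = 0$, i.e. $e = 1$. Hence $A = eAe$ and the two algebras coincide (in particular are isomorphic), which is the assertion.

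I expect the main technical point to be the careful identification of the relevant modules and annihilators: one must check that $F(A) = \Hom_A(Ae, A)$ is, as a left $A$-module, isomorphic to $Ae$ (which follows from $e$ being central, since then $\Hom_A(Ae, A) \simeq eA = Ae$), and that the canonical algebra homomorphism $A \to \End_B(FA)^{op}$ genuinely kills the annihilator $\operatorname{ann}_A(FA) = (1-e)A$. Once faithfulness of $FA$ is established from the isomorphism in Lemma~\ref{dcpropertycover}(i), the conclusion $e = 1$ is immediate. An alternative, perhaps cleaner, route avoids Lemma~\ref{dcpropertycover} entirely: by Proposition~\ref{idempotentcovers}-type reasoning together with faithfulness considerations, a cover $(A, Ae)$ with $A$ commutative must have $Ae$ faithful (the composition $A \to \End_{eAe}(Ae)^{op}$ being an isomorphism), and a faithful direct factor of a commutative algebra is the whole algebra; so I would present whichever of these two arguments is shortest.
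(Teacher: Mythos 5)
Your argument is correct, and it rests on the same two ingredients as the paper's proof: commutativity forces $e$ to be central, and the cover hypothesis supplies the double centralizer isomorphism of Lemma \ref{dcpropertycover}. The difference is in how the isomorphism is exploited. The paper uses the full strength of $A\simeq \End_{eAe}(\Hom_A(Ae,A))^{op}\simeq\End_{eAe}(eA)$ and then observes that centrality gives $eA=eAe$, so the right-hand side is $\End_{eAe}(eAe)\simeq eAe$; the conclusion $A\simeq eAe$ drops out in one line without ever mentioning annihilators. You instead use only the \emph{injectivity} of the canonical map $A\rightarrow\End_B(FA)^{op}$: since that map factors through the quotient by $\operatorname{ann}_A(FA)=(1-e)A$, injectivity forces $(1-e)A=0$, hence $e=1$. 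Your route gives the (nominally stronger, though for finite-dimensional algebras equivalent) conclusion $e=1$ explicitly, at the cost of the small extra verification that the kernel of the canonical map really contains $\operatorname{ann}_A(FA)$ and that $\Hom_A(Ae,A)\simeq eA=Ae$ has annihilator exactly $(1-e)A$ — all of which you carry out correctly. Either version is acceptable; the paper's is marginally shorter because it never needs to compute a kernel.
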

\begin{proof}
	The commutativity of $A$ implies that $e$ is a central idempotent and $eAe$ is commutative. If $(A, Ae)$ is a cover of $eAe$ then 
	\begin{align}
	A\simeq \End_{eAe}(eA)=\End_{eAe}(e^2A)=\End_{eAe}(eAe)\simeq eAe. \tag*{\qedhere}
	\end{align}
\end{proof}

\subsection{Morita algebras and Nakayama functor}

Morita algebras were introduced by Kerner and Yamagata in \citep{zbMATH06264368}. A finite-dimensional $k$-algebra $A$ is called a \emph{Morita algebra} if it can be written as the endomorphism ring of a generator-cogenerator over some self-injective algebra. A detailed account on Morita algebras and double centralizer properties can also be found in \citep{zbMATH06389727}.
A characterization of dominant dimension over Morita algebras in terms of cohomology over self-injective algebras was given in \citep{zbMATH06814513}.

For the proof of the main result, we require the following characterisation of Morita algebras. Theorem \ref{relativeMorita} is an extension of Proposition 2.9 of \citep{Onderivedequivalencesandhomologicaldimensions} (formulated in a different terminology).

\begin{Theorem}\label{relativeMorita}
	Let $A$ be a QF-3 $k$-algebra. Let $P$ be a faithful projective-injective left $A$-module.
	The following assertions are equivalent. \begin{enumerate}[(a)]
		\item $\domdim A\geq 2$ and the Nakayama functor restricts to $DA\otimes_A -\colon \add P\rightarrow \add P$.
		\item   $\domdim A \geq 2$ and $\add_A DA\otimes_A P=\add_A P$.
		\item The endomorphism algebra $B=\End_A(P)^{op}$ is self-injective with generator $P\in \operatorname{mod}(B)$ and \mbox{$A\simeq \End_B(P)$}, that is, $A$ is a Morita algebra.
	\end{enumerate}
	\begin{enumerate}[(a')]
		\item $\domdim A\geq 2$ and the Nakayama functor restricts to $-\otimes_A DA\colon \add DP\rightarrow \add DP$.
		\item $\domdim A\geq 2$ and $\add_A DP\otimes_A DA=\add_A DP$.
	\end{enumerate}
\end{Theorem}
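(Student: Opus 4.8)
The plan is to prove $(a)\Leftrightarrow(b)$, then $(c)\Rightarrow(b)$ and $(b)\Rightarrow(c)$, and finally to deduce $(a')$ and $(b')$ by passing to $A^{op}$. I would first record a standing reduction: since $P$ is a faithful projective-injective module over the QF-3 algebra $A$, we have $\add_A P=\add_A Ae$ for an idempotent $e$ with $Ae$ the minimal faithful projective-injective module; hence $B=\End_A(P)^{op}$ is Morita equivalent to $eAe$ by Proposition \ref{additiveclosureonendomorphismalg}, and whenever $\domdim A\geq 2$ the double centralizer property on $Ae$ together with that proposition yields $A\simeq\End_B(P)$, while the $eAe$-decomposition $Ae\simeq eAe\oplus(1-e)Ae$ shows that $P$ is a generator over $B$. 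So in the presence of $\domdim A\geq 2$, condition $(c)$ reduces to "$B$ is self-injective". For $(a)\Leftrightarrow(b)$: the Nakayama functor $\nu_A=DA\otimes_A-$ is an equivalence from $A\proj$ onto the full subcategory of injective $A$-modules, so $\nu_A(\add_A P)=\add_A(DA\otimes_A P)$; since $P$ is injective, $\nu_A$ maps the finite set of indecomposable summands of $P$ into itself, and an injective self-map of a finite set is bijective, so the inclusion $\add_A(DA\otimes_A P)\subseteq\add_A P$ already forces equality. As both $(a)$ and $(b)$ additionally require $\domdim A\geq 2$, they are equivalent.

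For $(c)\Rightarrow(b)$ I would invoke Morita's structure theorem \citep[section 16]{zbMATH03131967}: if $A$ is a Morita algebra then a faithful projective-injective left module occurs in the form $Af\simeq D(fA)$, so $\nu_A(Af)=D\Hom_A(Af,A)=D(fA)\simeq Af$, whence $\add_A(DA\otimes_A P)=\add_A(DA\otimes_A Af)=\add_A Af=\add_A P$; moreover $\domdim A\geq 2$ follows from the Morita--Tachikawa correspondence (Theorem \ref{MoritaTachikawa}), since a generator over a self-injective algebra is automatically a cogenerator.

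For $(b)\Rightarrow(c)$, by the reductions above it suffices, assuming $\domdim A\geq 2$, to show that $B$ is self-injective. The Schur functor $F=\Hom_A(P,-)$ restricts to an equivalence $\add_A P\simeq B\proj$ (because $FP\simeq{}_BB$ and $F$ is fully faithful on $\add_A P$), and it intertwines the two Nakayama functors: for $P'\in\add_A P$ one has natural isomorphisms
\begin{equation*}
F(\nu_A P')=\Hom_A\bigl(P,D\Hom_A(P',A)\bigr)\simeq D\bigl(\Hom_A(P',A)\otimes_A P\bigr)\simeq D\Hom_A(P',P)\simeq D\Hom_B(FP',B)=\nu_B(FP'),
\end{equation*}
using $DA\otimes_A P'\simeq D\Hom_A(P',A)$ for $P'$ projective, the adjunction $\Hom_A(P,D(-))\simeq D(-\otimes_A P)$, the evaluation isomorphism $\Hom_A(P',A)\otimes_A P\simeq\Hom_A(P',P)$, and the full faithfulness of $F$ on $\add_A P$. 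Since $(b)$ gives $\nu_A(\add_A P)\subseteq\add_A P$, transporting along $F$ yields $\nu_B(B\proj)\subseteq B\proj$; but $\nu_B$ is an equivalence onto the injective $B$-modules, so every injective $B$-module is projective, and for a finite-dimensional algebra this forces $B$ to be self-injective.

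It remains to incorporate the primed conditions. Statements $(a')$ and $(b')$ are precisely $(a)$ and $(b)$ applied to the QF-3 algebra $A^{op}$ with its faithful projective-injective module $DP$, and the associated form of $(c)$ asks $\End_{A^{op}}(DP)^{op}\simeq B^{op}$ to be self-injective — which holds iff $B$ is — and amounts to $A^{op}$ being a Morita algebra, equivalently $A$ being a Morita algebra. Hence all five conditions are equivalent. The step I expect to be the main obstacle is the Nakayama-intertwining isomorphism $F\circ\nu_A\simeq\nu_B\circ F$ on $\add_A P$, i.e. showing that self-injectivity of $B$ can be detected from the way $\nu_A$ acts on the projective-injective $A$-modules; the remaining work is bookkeeping with the finite set of proj-injective indecomposables and the classical double centralizer property for $\domdim A\geq 2$, together with keeping the left/right (op) conventions straight so that $A\simeq\End_B(P)$ and "$P$ is a generator over $B$" come out on the correct side.
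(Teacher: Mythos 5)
Your proof is correct, but it takes a noticeably different route from the paper's. The paper proves the cycle $(b)\Rightarrow(a)\Rightarrow(c)\Rightarrow(b)$: for $(a)\Rightarrow(c)$ it gets $A\simeq\End_B(P)$ and the generator property from the Morita--Tachikawa correspondence and then proves self-injectivity of $B$ by the direct computation $B=\Hom_A(P,P)\simeq D(DA\otimes_A P)\otimes_A P\in\add(DP\otimes_A P)=\add DB$, so that $B\in\add DB$; for $(c)\Rightarrow(b)$ it uses the $(A,A)$-bimodule isomorphism $DA\simeq P\otimes_B DP$ and the fact that $DB$ is a $B$-progenerator to get $DA\otimes_A P\simeq P\otimes_B DB$ and hence $\add(DA\otimes_A P)=\add P$. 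You instead prove $(a)\Leftrightarrow(b)$ by the Krull--Schmidt counting argument (an injective self-map of the finite set of indecomposable summands of $P$ is bijective), prove $(b)\Rightarrow(c)$ by the intertwining $F\nu_A\simeq\nu_B F$ on $\add_A P$ under projectivization, concluding that every injective $B$-module is projective, and prove $(c)\Rightarrow(b)$ by quoting Morita's classical structure theorem $Af\simeq D(fA)$. Each step checks out: the isomorphism chain $F(\nu_A P')\simeq D(\Hom_A(P',A)\otimes_A P)\simeq D\Hom_B(FP',B)\simeq\nu_B(FP')$ is valid for $P'\in A\proj$, and your reduction of $(c)$ to self-injectivity of $B$ (via the double centralizer property on $Ae$, Proposition \ref{additiveclosureonendomorphismalg}, and the right $eAe$-module decomposition $Ae\simeq eAe\oplus(1-e)Ae$ transported along the Morita equivalence) matches the paper's. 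What each approach buys: your functorial argument makes transparent \emph{why} self-injectivity of $B$ is detected by $\nu_A$ preserving $\add P$, and your counting argument gives $(a)\Leftrightarrow(b)$ without passing through $(c)$; the paper's proof is more self-contained in the direction $(c)\Rightarrow(b)$, relying only on the (standard, though unproved there) bimodule isomorphism $DA\simeq P\otimes_B DP$ rather than on Morita's full structure theorem — though since the paper itself cites that theorem as known, your appeal to it is legitimate. Two small blemishes worth fixing: the phrase ``since $P$ is injective'' in your $(a)\Rightarrow(b)$ step is not the operative hypothesis (what you use is hypothesis $(a)$ plus the fact that $\nu_A$ is fully faithful on projectives), and the generator claim for $P$ over $B$ should explicitly note that generators are preserved under the Morita equivalence between $eAe$ and $B$.
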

\begin{proof}
	We will show $(b)\implies (a)\implies (c)\implies (b)$. The implications $(b')\implies (a')\implies (c)\implies (b')$ are analogous.
	
	The implication $(b)\implies (a)$ is clear since $DA\otimes_A X\in \add DA\otimes_A P=\add P$, for all $X\in \add_A P$.
	
	Assume that $(a)$ holds. Write $B=\End_A(P)^{op}$.
	Let $Ae$ be a minimal faithful projective-injective module. Then $\add Ae=\add P$. By Proposition \ref{additiveclosureonendomorphismalg}, $\End_B(P)\simeq \End_{eAe}(Ae)$.
	By Morita-Tachikawa correspondence,
	\begin{align}
	\End_B(DP)^{op}\simeq \End_B(P)\simeq \End_{eAe}(Ae)\simeq A,
	\end{align} and $Ae$ is a generator of $eAe$. Since equivalence of categories preserves generators, $P$ is a generator of $B$. 
	 It remains to show that $B$ is  self-injective. This follows by observing that
	\begin{align}
	B=\Hom_A(P, P)\simeq \Hom_A(P, A)\otimes_A P\simeq D(DA\otimes_A P)\otimes_A P\in \add DP\otimes_A P=\add DB.
	\end{align}
	Hence $B$ is $B$-injective. 
	
	Finally, assume that $(c)$ holds.
	Let $Ae$ be a minimal faithful projective-injective module. Again, since $\add_A Ae=\add_A P$, $eAe$ is Morita equivalent to $B$. So $Ae$ is a generator of $eAe$ and $A\simeq \End_B(P)\simeq \End_{eAe}(Ae)$. By Morita-Tachikawa correspondence,
	$\domdim A\geq 2$. Since $A\simeq \End_B(P)$ there exists an $(A, A)$-bimodule isomorphism $DA\simeq P\otimes_B DP$. Moreover, as left $A$-modules,
	\begin{align}
	DA\otimes_A P\simeq P\otimes_B DP\otimes_A P\simeq P\otimes_B DB.
	\end{align}
	Since $DB$ is $B$-projective and $B\in \add DB$, $DB$ is a $B$-progenerator. Hence, $\add_A DA\otimes_A P=\add_A P$. This completes the proof.
\end{proof}
 
 Using the terminology of \citep{Onderivedequivalencesandhomologicaldimensions}, Theorem \ref{relativeMorita} says that all faithful projective-injective modules over a Morita algebra are strongly projective-injective. In particular, this provides a new and shorter proof for Proposition 2.9 of \citep{Onderivedequivalencesandhomologicaldimensions}.
 
\subsection{Proof of the Main Theorem}

\begin{proof}[Proof of Theorem \ref{gendocharaccovers}]
The equivalence $(ii)\Leftrightarrow (iii)$ follows from the definition of Morita algebras and the Morita-Tachikawa correspondence. The equivalence $(ii)\Leftrightarrow (iv)\Leftrightarrow(v)$ is the content of Theorem \ref{relativeMorita}.
	 
	Assume that $A$ is a Morita algebra. 
	By Theorem \ref{relativeMorita}, $\add DA\otimes_A P=\add P$. Let $Ae$ be a minimal projective-injective faithful module.
	Then $\add_A \Hom_A(P, A)=\add_A DP=\add D(Ae)$. Since $\domdim A\geq 2$, we can write
	\begin{align}
	A\simeq \End_{eAe}(Ae)\simeq \End_{eAe}(D(Ae))^{op}\simeq \End_B(\Hom_A(P, A))^{op}.
	\end{align}
	 This shows that $(A, P)$ is a cover of $B$.

	Conversely, suppose that $(A, P)$ is a cover of $B:=\End_A(P)^{op}$.
	By Lemma \ref{dcpropertycover}, there is a double centralizer property on $\Hom_A(P, A)$. More precisely,
	\begin{align}
	\End_A(\Hom_A(P, A))\simeq B \quad \End_B(\Hom_A(P, A))^{op}\simeq A. \label{eqfc100}
	\end{align} In particular, $\Hom_A(P, A)$ is faithful-projective as right $A$-module. Hence, there exists an injective $A$-homomorphism
	$A\rightarrow \Hom_A(P, A)^s$, for some $s>0$.  Since $DP$ is projective as right $A$-module, there is a monomorphism $DP\rightarrow A^t\rightarrow \Hom_A(P, A)^{st}$. $DP$ is injective as right $A$-module. Hence, $DP\in \add_A \Hom_A(P, A)$.
	
	We claim now that $DA\otimes_A P$ is a left $A$-projective module. To see this, define $P'$ to be the direct sum of all non-isomorphic indecomposable $A$-modules that belong to the additive closure of $P$. So, $\add P=\add P'$ and $P'\in \add_A DA\otimes_A P=\add_A DA\otimes_A P'$. By Krull-Remak-Schmidt theorem, we can write $DA\otimes_A P'\simeq P'\oplus X$, for some $A$-module $X$. 
	On the other hand,
	\begin{align}
	\End_A(P'\oplus X)^{op}\simeq \End_A(DA\otimes_A P')^{op}\simeq \End_A(\Hom_A(P', A))\simeq \End_A(P')^{op}.
	\end{align}
	So, by comparing $k$-dimensions, $X$ must be the zero module. Hence, $DA\otimes_A P'$ is a faithful projective-injective  module. Consequently, $DA\otimes_A P$ is also a faithful projective-injective  module. 
	Now, the double centralizer property (\ref{eqfc100}) implies that $\domdim A\geq 2$. Since both $P$ and $DA\otimes_A P$ are  faithful projective-injective  modules, $\add_AP=\add_A DA\otimes_A P$. So, $A$ is a Morita algebra by Theorem \ref{relativeMorita}.
\end{proof}	

\begin{Remark}\label{idempotentsdcp}
	For a idempotent $e$ of $A$, $\Hom_A(Ae, A)\simeq eA$ as $(eAe, A)$-bimodules. In addition, assume that $e$ is an idempotent so that $Ae$ or $eA$ is projective-injective. By Theorem \ref{gendocharaccovers}, 
	a double centralizer property on $Ae$ is not equivalent to a double centralizer property on $eA$ unless $A$ is a Morita algebra.
	 In particular, if $A$ is a Morita algebra then $A=\End_{eAe}(eA)^{op}=\End_{eAe}(Ae)$.
\end{Remark}

\subsection{An application and two examples}

A finite-dimensional $k$-algebra is called \emph{QF-1 algebra} if all faithful $A$-modules have the double centralizer property (see \citep{zbMATH03061631}).

\begin{Cor}\label{qf1selfinjective}
	Let $A$ be a QF-1 $k$-algebra. Assume that $P$ is a faithful projective-injective left $A$-module.
	Then, $A$ is self-injective if and only if $\Hom_A(P, A)$ is a faithful right $A$-module.
\end{Cor}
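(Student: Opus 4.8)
The plan is to apply Theorem \ref{gendocharaccovers} with the QF-1 hypothesis doing the work of upgrading "Morita algebra" to "self-injective". First I would dispose of the easy direction: if $A$ is self-injective, then $A$ itself is a faithful projective-injective module, so $\domdim A = \infty \geq 2$, and moreover $DA \simeq A$ forces $\add_A DA\otimes_A P = \add_A P$ trivially; hence $A$ is a Morita algebra (indeed self-injective) and by the implication $(ii)\implies(i)$ of Theorem \ref{gendocharaccovers}, $(A,P)$ is a cover of $B := \End_A(P)^{op}$. By Lemma \ref{dcpropertycover} this gives a double centralizer property on $\Hom_A(P,A)$, and in particular $\Hom_A(P,A)$ is faithful as a right $A$-module (being isomorphic to $eA$ for the relevant idempotent, as in Remark \ref{idempotentsdcp}). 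Alternatively, and more directly, $\Hom_A(P,A)\simeq D(DA\otimes_A P)\simeq DP$ when $A$ is self-injective, and $DP$ is a faithful right $A$-module since $P$ is a faithful left $A$-module.

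For the converse, suppose $\Hom_A(P,A)$ is a faithful right $A$-module. The key idea is that $\Hom_A(P,A)$ is always a projective right $A$-module — it is $\Hom_A(P, A)$ with $P$ left-projective, hence a direct summand of a finite direct sum of copies of $\Hom_A(A,A)=A$. So $\Hom_A(P,A)$ is a faithful \emph{projective} right $A$-module. Now I would invoke the QF-1 hypothesis applied to the opposite algebra, or rather observe that faithfulness plus projectivity makes $\Hom_A(P,A)$ a progenerator-like object on which one wants the double centralizer property; the QF-1 condition on $A$ (equivalently on $A^{op}$, since QF-1 is left-right symmetric) guarantees that every faithful $A$-module has the double centralizer property, so in particular $\Hom_A(P,A)$ does. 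This yields $\End_A(\Hom_A(P,A))\simeq B$ and $\End_B(\Hom_A(P,A))^{op}\simeq A$, which is exactly condition $(i)$ of Theorem \ref{gendocharaccovers} via Lemma \ref{dcpropertycover}(i): the canonical map $A\rightarrow \End_B(FA)^{op}$ is an isomorphism. Hence $(A,P)$ is a cover of $B$, so by Theorem \ref{gendocharaccovers} $A$ is a Morita algebra and $B$ is self-injective.

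It then remains to bootstrap from "$A$ is a Morita algebra" to "$A$ is self-injective", which is where faithfulness of $\Hom_A(P,A)$ must be used a second time (or where I should be more careful about exactly which faithful module QF-1 is applied to). Since $A$ is a Morita algebra, Theorem \ref{relativeMorita} gives $\add_A DA\otimes_A P = \add_A P$ and $\domdim A\geq 2$; combined with $DP\in\add_A\Hom_A(P,A)$ and the double centralizer property on $\Hom_A(P,A)$ from the QF-1 hypothesis, one gets that $A$ itself — being a faithful $A$-module — has the double centralizer property, and for a Morita algebra with $A$ enjoying the double centralizer property on the minimal faithful projective-injective side one deduces $A\in\add_A P$, i.e. $A$ is projective-injective, i.e. $A$ is self-injective. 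The main obstacle I anticipate is pinning down precisely this last deduction: one must show that the double centralizer property (guaranteed by QF-1) forces the regular module into $\add_A P$ rather than merely being faithful, and the cleanest route is probably to run the computation $A\simeq\End_B(\Hom_A(P,A))^{op}$ together with $\Hom_A(P,A)\simeq DP$ (which holds once $A$ is self-injective but must here be \emph{derived} from Morita-algebra structure) — so the real content is checking that $DA\otimes_A A\simeq DA$ lands in $\add_A P$, equivalently $\add_A DA=\add_A P$, which by the Morita-algebra equation $\add_A DA\otimes_A P=\add_A P$ and faithfulness of both $A$ and $P$ reduces to a dimension count.
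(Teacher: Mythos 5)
Your first half is exactly the paper's argument: the easy direction is fine, and in the converse direction you correctly use the QF-1 hypothesis on the faithful right module $\Hom_A(P,A)$ to obtain a double centralizer property, identify this with condition (i) of Lemma \ref{dcpropertycover} (via $\End_A(\Hom_A(P,A))\simeq\End_A(P)^{op}$, which holds because $P$ is projective, hence reflexive), and conclude from Theorem \ref{gendocharaccovers} that $A$ is a Morita algebra.

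The final step --- upgrading ``Morita algebra'' to ``self-injective'' --- is where your argument has a genuine gap, and it is precisely the step the paper does \emph{not} prove from scratch: it cites \citep[Proposition 2.2]{zbMATH07042781}, which states that a Morita algebra is QF-1 if and only if it is self-injective. Your proposed substitute does not work. Observing that the regular module has the double centralizer property is vacuous (it always does, for any algebra, since $\End_A(A)\simeq A^{op}$), so it cannot force $A\in\add_A P$. And your claimed reduction --- that $\add_A DA\otimes_A P=\add_A P$ together with faithfulness of $A$ and $P$ yields $\add_A DA=\add_A P$ ``by a dimension count'' --- is false: the Auslander algebra of $k[x]/(x^2)$ is a Morita algebra, so it satisfies $\add_A DA\otimes_A P=\add_A P$ with $P$ faithful projective-injective, yet $DA\notin\add_A P$ because that algebra has global dimension $2$ and is not self-injective. (It is not QF-1, so it is no counterexample to the corollary --- but it shows that the facts you have assembled at that point do not suffice to conclude.) To close the gap one must use the QF-1 hypothesis again in an essential way, on a carefully chosen faithful module other than $A$ or $\Hom_A(P,A)$, and derive a contradiction when $A$ is not self-injective; that is the non-trivial content of the Chan--Marczinzik result the paper invokes, and it is not recovered by the manipulations you sketch.
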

\begin{proof}
	One direction is clear. Assume that $\Hom_A(P, A)$ is faithful. Since $A$ is a QF-1 algebra, $(A, P)$ is a cover of $\End_A(\Hom_A(P, A))\simeq \End_A(P)^{op}$. By Theorem \ref{gendocharaccovers}, $A$ is a Morita algebra. By \citep[Proposition 2.2]{zbMATH07042781}, a Morita algebra is QF-1 if and only if it is self-injective. Therefore, $A$ is self-injective.
\end{proof}

\begin{Example}
	\emph{For a QF-3 algebra $A$ with dominant dimension two and with a projective-injective faithful module $P$ the pair $(A, P)$ is not, in general, a cover of $\End_A(P)^{op}$.}
	
	Let $k$ be an algebraically closed field. \label{faithfulprojinjnotgivesacover}
	Let $A$ be the following bound quiver $k$-algebra 
	
	\begin{center}
		\begin{tikzcd}
			1 \arrow[r, "\alpha_1"] & 2 \arrow[r, "\alpha_2"]& 3,
		\end{tikzcd} $\alpha_2\alpha_1=0 $.
	\end{center} 
	Note that we read the arrows in a path like morphisms, that is, from right to left.
	
	Denote by $P(i)$ the projective indecomposable module associated with the vertex $i$ and denote by $I(i)$ the indecomposable injective module associated with the vertex $i$. 
	
	The indecomposable projective (left) modules are given by
	\begin{equation}
		P(1) = I(2) = \begin{array}{c}
			1 \\
			2
		\end{array}, \quad P(2)=I(3)=\begin{array}{c}
			2 \\
			3
		\end{array}, \quad P(3)=\begin{array}{c}
			3
		\end{array}.
	\end{equation} 
	\begin{align}
		0\rightarrow A\rightarrow P(1)\oplus P(2)\oplus P(2)\rightarrow P(1)\rightarrow I(1)\rightarrow 0
	\end{align}is a minimal injective resolution of $A$. Denote by $P$ the projective module $P(1)\oplus P(2)$. Hence, $A$ is a QF-3 algebra with minimal faithful projective-injective left $A$-module $P$ and with $\domdim A\geq 2$.  Here $B=\End_A(P)^{op}$ is the path algebra with quiver
	\begin{center}
		\begin{tikzcd}
			1 \arrow[r, "\alpha_1"] & 2.
		\end{tikzcd} 
	\end{center} But $B$ is not self-injective. By Theorem \ref{gendocharaccovers}, $(A, P)$ is not a cover of $B$. But, $(A, P(2)\oplus P(3))$ is a cover of $\End_A(P)^{op}$ by Proposition \ref{dominantgeqtwodcp}. In fact, $P(2)\oplus P(3)\simeq \Hom_A(DA, P)=\Hom_A(DP, A)$ as left $A$-modules.
\end{Example}

This example also shows that $\End_B(\Hom_A(P, A))^{op}$ is not isomorphic to $\End_B(P)$, in general.

\begin{Remark}
	If we drop the injectivity of $Ae$ and of $eA$ in Remark \ref{idempotentsdcp}, the statement is false. This can be seen in the next example.
\end{Remark}

\begin{Example}\label{example15}
\emph{There are idempotents $e$ and non-Morita algebras $A$ so that there are  double centralizer properties on $Ae$ and on $eA$.}

Let $k$ be an algebraically closed field. Let $A$ be the following bound quiver $k$-algebra
\begin{equation}
	\begin{tikzcd}[every arrow/.append style={shift left=0.75ex}]
		1 \arrow[r, "\alpha"] & 2 \arrow[l, "\beta"] \arrow[r, "\gamma"] & 3\arrow[l, "\theta"]
	\end{tikzcd}, \quad \gamma\alpha=\beta\theta=\alpha\beta=\gamma\theta=0.
\end{equation}We are using the same notation as in the previous example. So, the indecomposable projective (left) modules are given by 
\begin{equation}
	P(1)  = Ae_1= \begin{array}{c}
		1 \\2\\1
	\end{array}, \quad P(2)=Ae_2=\begin{array}{ccc}
		& 2 &\\ 1& & 3 \\ & & 2
	\end{array}, \quad P(3)=Ae_3=\begin{array}{c}
		3\\2
	\end{array}.
\end{equation} The projective $P(3)$ has dominant dimension zero so $A$ cannot be a Morita algebra. We can see that $A$ has an involution fixing the primitive idempotents and interchanging $\alpha$ with $\beta$ and $\gamma$ with $\theta$. Fix $e=e_1+e_2$. By a direct computation, we can see that $(A, P(1)\oplus P(2))$ is a cover of $B=eAe$. Here, $B$ is the bound quiver $k$-algebra 
\begin{equation}
	\begin{tikzcd}[every arrow/.append style={shift left=0.75ex}]
		1 \arrow[r, "\alpha"] & 2 \arrow[l, "\beta"] \arrow[loop right, "t", looseness=15]
	\end{tikzcd}, \quad \alpha\beta=\beta t=t\alpha = 0.
\end{equation}Again by a direct computation or by observing that the duality of $A$ restricts to one of $B$ fixing $e$ it follows that $\End_{eAe}(Ae)\simeq \End_{eAe}(eA)^{op}\simeq A$.
\end{Example}

\section*{Acknowledgments}
The results of this paper are contained in the author's forthcoming PhD thesis, financially supported by \textit{Studienstiftung des Deutschen Volkes}. The author would like to thank Steffen Koenig for all his comments and suggestions towards improving this manuscript.

\bibliographystyle{plainnat}

\Address
\end{document}